\newtheorem{theorem}{Theorem}[section]
\newtheorem{proposition}[theorem]{Proposition}
\newcommand{\End}{\operatorname{End} }
\newcommand{\Ext}{\operatorname{Ext} }
\newcommand{\Aut}{\operatorname{Aut} }
\newcommand{\C}{\operatorname{C} }
\newcommand{\Hom}{\operatorname{Hom} }
\newcommand{\Ha}{\operatorname{H} }
\newcommand{\B}{\operatorname{B} }
\newcommand{\Z}{\operatorname{Z} }
\newcommand{\im}{\operatorname{image} }
\theoremstyle{definition}
\newtheorem{example}[theorem]{Example}
\newtheorem{remark}[theorem]{Remark}
\numberwithin{equation}{subsection}
\theoremstyle{plain}
\newtheorem{problem}{Problem}
\newtheorem{corollary}[theorem]{Corollary}
\numberwithin{equation}{section}
\begin{document}
\title{Extensions and automorphisms of Lie algebras}
\author{Valeriy G. Bardakov}
\author{Mahender Singh}
\date{\today}
\address{Sobolev Institute of Mathematics,  Novosibirsk 630090, Russia}
\address{Novosibirsk State University, Novosibirsk 630090, Russia}
\address{Laboratory of Quantum Topology, Chelyabinsk State University, Brat'ev Kashirinykh street 129, Chelyabinsk 454001, Russia.}
\address{Novosibirsk State Agrarian University, Dobrolyubova street, 160,  Novosibirsk, 630039, Russia.}

\email{bardakov@math.nsc.ru}
\address{Indian Institute of Science Education and Research (IISER) Mohali, Sector 81,  S. A. S. Nagar, P. O. Manauli, Punjab 140306, India.}
\email{mahender@iisermohali.ac.in}

\subjclass[2010]{Primary 17B40, 17B56; Secondary 17B01, 13N15}
\keywords{Automorphism of Lie algebra, Extension of Lie algebras, Free nilpotent Lie algebra, Cohomology of Lie algebra}

\begin{abstract}
Let $0 \to A \to L \to B \to 0$ be a short exact sequence of Lie algebras over a field $F$, where $A$ is abelian. We show that the obstruction for a pair of automorphisms in $\Aut(A) \times \Aut(B)$ to be induced by an automorphism in $\Aut(L)$ lies in the Lie algebra cohomology $\Ha^2(B;A)$. As a consequence, we obtain a four term exact sequence relating automorphisms, derivations and cohomology of Lie algebras. We also obtain a more explicit necessary and sufficient condition for a pair of automorphisms in $\Aut\big(L_{n,2}^{(1)}\big) \times \Aut\big(L_{n,2}^{ab}\big)$ to be induced by an automorphism in $\Aut\big(L_{n,2}\big)$, where $L_{n,2}$ is a free nilpotent Lie algebra of rank $n$ and step $2$.
\end{abstract}
\maketitle

\section{Introduction} \label{sec1}
Let $A$ and $B$ be Lie algebras over a field $F$ where $A$ is abelian. We say that $A$ is a left $B$-module if there is a $F$-homomorphism $B \otimes A \to A$ written as $b \otimes a \mapsto ba$ such that $$[b_1,b_2]a=b_1(b_2a)-b_2(b_1a)~\textrm{for all}~b_1,b_2 \in B~\textrm{and}~a \in A.$$ Let $\End(A)$ be the Lie algebra of all $F$-endomorphisms of $A$ equipped with the Lie bracket $$[f,g]=f g- g f~\textrm{for all}~f,g \in \End(A).$$ Here $f g (a) = f\big(g(a)\big)$ for $a \in A$. Then a left $B$-module structure on $A$ is equivalent to existence of a Lie algebra homomorphism $$B \to  \End(A).$$

Let $A$ and $B$ be Lie algebras. Then an extension of $B$ by $A$ is a short exact sequence of Lie algebras $$0 \to A \stackrel{i}{\to} L \stackrel{p}{\to} B \to 0,$$ where $L$ is a Lie algebra. Without loss of generality, we may assume that $i$ is the inclusion map and we omit it from the notation. It follows from the exactness that $A$ is an ideal of $L$. This together with the Jacobi identity gives a left $L$-module structure on $A$ given by $$xa= [x,a]~\textrm{for}~ x \in L~\textrm{and}~ a \in A.$$  Let $s:B \to L$ be a section of $p$, that is, $s$ is a $F$-linear map such that $p s=1$. If $A$ is abelian, then this induces a left $B$-module structure on $A$ given by $$ba=\big[s(b),a\big]~\textrm{for}~b \in B~\textrm{and}~a \in A.$$ In fact, the converse is also true. Note that the above $B$-module structure on $A$ does not depend on the choice of the section. We denote the above $B$-module structure on $A$ by $$\alpha: B \to  \End(A).$$

Let $\Aut(A)$, $\Aut(L)$ and $\Aut(B)$ denote the groups of all Lie algebra automorphisms of $A$, $L$ and $B$, respectively.  Let $\Aut_A(L)$ denote the group of all Lie algebra automorphisms of $L$ which keep $A$ invariant as a set. Note that an automorphism $\gamma \in \Aut_A(L)$ induces automorphisms $\gamma|_A \in \Aut(A)$ and $\overline{\gamma} \in \Aut(B)$ given by $\gamma|_A(a)= \gamma(a)$ for all $a \in A$ and $\overline{\gamma}(b)= p \big(\gamma(s(b))\big)$ for all $b \in B$. This gives a group homomorphism $$\tau: \Aut_A(L) \to \Aut(A) \times \Aut(B)$$ given by $$\tau(\gamma)= (\gamma|_A, \overline{\gamma}).$$ 

A pair of automorphisms $(\theta, \phi) \in \Aut(A)\times \Aut(B)$ is called inducible if there exists a $\gamma \in \Aut_A(L)$ such that $\tau(\gamma)= (\theta, \phi)$. Our main aim in this paper is to investigate the following natural problem.

\begin{problem}\label{prob1}
Let $0 \to A \to L \to B \to 0$ be an extension of Lie algebras over a field $F$. Under what conditions is a pair of automorphisms $(\theta, \phi) \in \Aut(A) \times \Aut(B)$ inducible?
\end{problem}

Various aspects of automorphisms of Lie algebras have been investigated extensively in the literature, see for example \cite{Borel, Jacobson, Steinberg}. Automorphisms of real Lie algebras of dimension five or less have been classified in \cite{Fisher}. Further, this work has been extended to automorphisms of six dimensional real Lie algebras in the recent thesis by Gray \cite{Gray}. Recall that, the derived series of a Lie algebra $L$ is given by $L^{(0)} = L$ and $L^{(k)} = [L^{(k-1)}, L^{(k-1)}]$ for $k \geq 1$. In \cite{Bahturin}, Bahturin and Nabiyev investigated the structure of automorphism groups of Lie algebras of the form $L/[R,R]$, where  $L$ a free Lie algebra over a commutative and associative ring $k$ and $R$ is an ideal of $L$ such that $L/R$ is a free $k$-module. As an  application, they obtained the structure of the automorphism group of free metabelian Lie algebra $L/L^{(2)}$ of finite rank, and showed that it has automorphisms which cannot be lifted to automorphisms of $L$. However, to our knowledge, almost nothing seems to be known about Problem \ref{prob1}. For extensions of groups, the analogous problem has been investigated recently in \cite{Jin, Jin1, Passi, Robinson}, wherein using cohomological methods the problem for finite groups has been reduced to $p$-groups.

We would like to remark that our approach in this paper is purely algebraic and we make no reference to Lie groups.  However, it would be interesting to explore connections of our results to Lie groups, and we leave it to the curious reader. 

The paper is organized as follows. In Section \ref{sec2}, we obtain a necessary and sufficient condition for a pair of automorphisms to be inducible. Our main theorem is the following.

\begin{theorem}\label{main-thm}
Let $0 \to A \to L \to B \to 0$ be an abelian extension of Lie algebras over a field $F$ and $(\theta, \phi) \in \Aut(A)\times \Aut(B)$. Then the pair $(\theta, \phi)$ is inducible if and only if the following two conditions hold:
\begin{enumerate}
\item There exists an element $\lambda \in \Hom(B,A)$ such that 
$$\theta \big( \mu(b_1 , b_2)\big)-\mu \big(\phi(b_1),\phi(b_2) \big)=\phi(b_1)\lambda(b_2)-\phi(b_2)\lambda(b_1)-\lambda \big([b_1,b_2] \big)~\textrm{for all}~b_1,b_2 \in B.$$
\item $\alpha \phi(b) =\theta \alpha(b) \theta^{-1}$ for all $b \in B$.
\end{enumerate}
\end{theorem}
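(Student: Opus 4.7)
The plan is to fix a section $s : B \to L$ of $p$, so that every element of $L$ has a unique decomposition $s(b) + a$ with $b \in B$ and $a \in A$, and to interpret $\mu$ as the factor set $\mu(b_1, b_2) = [s(b_1), s(b_2)] - s([b_1,b_2])$ arising from the extension, with the module action being $\alpha(b)(a) = [s(b), a]$.

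For necessity, suppose $\gamma \in \Aut_A(L)$ with $\tau(\gamma) = (\theta, \phi)$. Since $p\big(\gamma(s(b))\big) = \phi(b) = p\big(s(\phi(b))\big)$, the $F$-linear map
$$\lambda : B \to A, \qquad \lambda(b) = \gamma(s(b)) - s(\phi(b))$$
takes values in $A$. Condition (1) will be extracted by computing $\gamma\big([s(b_1), s(b_2)]\big)$ in two ways: first by applying $\gamma$ directly to $s([b_1, b_2]) + \mu(b_1, b_2)$, and second by expanding $[\gamma(s(b_1)), \gamma(s(b_2))]$ via bilinearity, using $[\lambda(b_1), \lambda(b_2)] = 0$ since $A$ is abelian, and translating the resulting $A$-valued brackets back through $\alpha$. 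Equating the two expressions and canceling the common $s(\phi([b_1, b_2]))$ produces (1). Condition (2) will come from computing $\gamma\big([s(b), a]\big)$ in two ways for $a \in A$: once yielding $\theta\big(\alpha(b)(a)\big)$ and once yielding $\alpha(\phi(b))\big(\theta(a)\big)$, with the $\lambda(b)$ contribution vanishing again by abelianness of $A$.

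For sufficiency, given $\lambda \in \Hom(B, A)$ satisfying (1) and (2), I will define
$$\gamma : L \to L, \qquad \gamma(s(b) + a) = s(\phi(b)) + \lambda(b) + \theta(a),$$
which is a well-defined $F$-linear map by uniqueness of the decomposition. The main check is that $\gamma$ respects the bracket: expanding $[\gamma(s(b_1) + a_1), \gamma(s(b_2) + a_2)]$ and $\gamma([s(b_1) + a_1, s(b_2) + a_2])$, the $s$--$s$ contributions match precisely by (1), the mixed $s$--$A$ cross terms match by (2), and all $A$--$A$ brackets vanish on both sides by abelianness of $A$. Bijectivity follows because, relative to the decomposition $L = s(B) \oplus A$, the map $\gamma$ is block upper-triangular with invertible diagonal blocks $\phi$ and $\theta$; and by construction $\gamma(A) = A$ with $\gamma|_A = \theta$ and $\overline{\gamma} = \phi$. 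The only real obstacle is bookkeeping in the two bracket expansions; once abelianness of $A$ is used to discard the quadratic $A$-terms, all substance of the theorem is packaged exactly into the cocycle-type identity (1) and the equivariance (2).
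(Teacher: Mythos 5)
Your proposal is correct and follows essentially the same route as the paper's proof: the same $\lambda(b)=\gamma(s(b))-s(\phi(b))$, the same explicit formula $\gamma\big(a+s(b)\big)=\theta(a)+\lambda(b)+s(\phi(b))$ for the converse, and the same two-way bracket computations (with $[\lambda,\lambda]$ and $[\lambda,A]$ terms killed by abelianness of $A$) yielding conditions (1) and (2). The only cosmetic differences are that you obtain bijectivity from block-triangularity of $\gamma$ with respect to $L=A\oplus s(B)$ instead of the paper's direct injectivity/surjectivity check, and you derive (2) by computing $\gamma\big([s(b),a]\big)$ two ways rather than inserting $\gamma\gamma^{-1}$ as the paper does --- both immaterial.
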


Here, $\Hom(B,A)$ is the group of all $F$-linear maps from $B$ to $A$ and $\mu$ is the 2-cocycle corresponding to the extension $0 \to A \to L \to B \to 0$ (see Section \ref{sec2} for details). In Section \ref{sec3}, we use this condition to obtain an obstruction to inducibility of automorphisms and derive an exact sequence relating automorphisms, derivations and cohomology of Lie algebras. More precisely, we prove the following theorem.

\begin{theorem}\label{wells-sequence}
Let $\mathcal{E}:0 \to A \to L \to B \to 0$ be an abelian extension of Lie algebras over a field $F$. Let $\alpha: B \to  \End(A)$ denote the induced $B$-module structure on $A$. Then there is an exact sequence
$$0 \to \Z^1(B;A) \stackrel{i}{\longrightarrow} \Aut_A(L) \stackrel{\tau}{\longrightarrow} C_ \alpha \stackrel{\omega_{\mathcal{E}}}{\longrightarrow} \Ha^2(B;A).$$
\end{theorem}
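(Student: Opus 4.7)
The plan is, after fixing a section $s: B \to L$ and its associated Chevalley--Eilenberg 2-cocycle $\mu(b_1,b_2) = [s(b_1), s(b_2)] - s([b_1,b_2])$, to identify $\Z^1(B;A)$ with $\ker(\tau)$ via an explicit embedding $i$, verify that $\tau$ lands in $C_\alpha := \{(\theta,\phi) \in \Aut(A) \times \Aut(B) \mid \alpha \phi(b) = \theta \alpha(b) \theta^{-1}\text{ for all }b\in B\}$, and construct an obstruction map $\omega_{\mathcal{E}} : C_\alpha \to \Ha^2(B;A)$ whose vanishing on $(\theta,\phi)$ is, by \thmref{main-thm}, equivalent to inducibility.

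For $\lambda \in \Z^1(B;A)$ I would define $i(\lambda) : L \to L$ by $i(\lambda)(a + s(b)) = a + \lambda(b) + s(b)$. Expanding $[i(\lambda)(s(b_1)), i(\lambda)(s(b_2))]$ using the $B$-module structure on $A$ shows that the 1-cocycle identity $\lambda([b_1,b_2]) = b_1\lambda(b_2) - b_2\lambda(b_1)$ is exactly what is required for $i(\lambda)$ to preserve brackets; it is $F$-linear, bijective with inverse $i(-\lambda)$, fixes $A$ pointwise, and satisfies $\tau(i(\lambda)) = (1_A, 1_B)$, so $i$ lands injectively in $\ker(\tau)$. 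Conversely, any $\gamma \in \ker(\tau)$ fixes $A$ pointwise and satisfies $\gamma(s(b)) = s(b) + \lambda_\gamma(b)$ for a unique $F$-linear $\lambda_\gamma : B \to A$, and bracket-preservation of $\gamma$ forces $\lambda_\gamma \in \Z^1(B;A)$; this gives exactness at $\Aut_A(L)$. Given $\gamma \in \Aut_A(L)$ with $\tau(\gamma) = (\theta,\phi)$, applying $\gamma$ to the identity $[s(b),a] = ba$ yields $\theta(ba) = \phi(b)\theta(a)$, so $(\theta,\phi) \in C_\alpha$.

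Next I would define $\omega_{\mathcal{E}}(\theta,\phi) := [\nu_{(\theta,\phi)}]$, where
$$\nu_{(\theta,\phi)}(b_1,b_2) := \mu(b_1,b_2) - \theta^{-1}\mu(\phi(b_1),\phi(b_2)).$$
The 2-cocycle identity for $\nu_{(\theta,\phi)}$ is checked by applying the cocycle identity for $\mu$ to both $(b_1,b_2,b_3)$ and $(\phi(b_1),\phi(b_2),\phi(b_3))$, and using the intertwining identity $\theta^{-1}\alpha\phi(b) = \alpha(b)\theta^{-1}$ from the $C_\alpha$-condition to convert actions by $\phi(b_i)$ on the twisted term into actions by $b_i$. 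Replacing $s$ by $s + \sigma$ changes $\mu$ by $\delta\sigma$ and, via the same intertwining, alters $\nu_{(\theta,\phi)}$ by $\delta(\sigma - \theta^{-1}\sigma\phi)$, so the class $[\nu_{(\theta,\phi)}]$ is independent of the section. Finally, setting $\tilde\lambda := \theta^{-1}\lambda$ and again using the intertwining shows that condition (1) of \thmref{main-thm} is equivalent to $\nu_{(\theta,\phi)} = \delta \tilde\lambda$, so $\omega_{\mathcal{E}}(\theta,\phi) = 0$ if and only if $(\theta,\phi)$ is inducible, giving exactness at $C_\alpha$.

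The main obstacle is the verification that $\nu_{(\theta,\phi)} \in \Z^2(B;A)$ with respect to the original action $\alpha$: without the $C_\alpha$-compatibility the twisted term $\theta^{-1}\mu(\phi(\cdot),\phi(\cdot))$ would satisfy the cocycle identity only with respect to $\alpha \circ \phi$, and the intertwining is precisely what permits replacing $\alpha\phi(b_i)$ by $\theta^{-1}\alpha\phi(b_i)\theta = \alpha(b_i)$ after commuting $\theta^{-1}$ past the outer action. Section-independence is a direct analogue of the same manipulation at the level of coboundaries, and the remaining exactness statements then follow by routine bookkeeping.
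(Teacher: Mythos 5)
Your proposal is correct and takes essentially the same route as the paper: identify $\ker(\tau)$ with $\Z^1(B;A)$ via $a+s(b)\mapsto a+\lambda(b)+s(b)$, check compatibility of induced pairs, and define the obstruction by a cocycle built from $\mu$, $\theta$, $\phi$ whose coboundary property is, via Theorem \ref{main-thm}, equivalent to inducibility. The only (cosmetic) difference is your choice of representative $\nu_{(\theta,\phi)}=\mu-\theta^{-1}\mu(\phi\cdot,\phi\cdot)$ instead of the paper's $\mu_{\theta,\phi}=\theta\mu(\phi^{-1}\cdot,\phi^{-1}\cdot)-\mu$; these differ (up to sign) by the twist action of the compatible pair $(\theta,\phi)$, so they vanish in $\Ha^2(B;A)$ simultaneously, and your explicit checks of the cocycle property and section-independence via the intertwining $\theta^{-1}\alpha\phi(b)\theta=\alpha(b)$ are sound (the paper leaves these implicit).
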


See Section \ref{sec3} for unexplained notation. Finally, in Section \ref{sec4}, we discuss Problem \ref{prob1} for free nilpotent Lie algebras of rank $n$ and step 2.
\bigskip

\section{Condition for extension and lifting of automorphisms} \label{sec2}
Let $0 \to A \to L \stackrel{p}{\to} B \to 0$ be an abelian extension of Lie algebras and $s:B \to L$ be a section. For any two elements $b_1,b_2 \in B$, we have $$p\big(s([b_1,b_2])\big)= [b_1,b_2] = \big[p(s(b_1)), p(s(b_2))\big]= p\big( [s(b_1),s(b_2)]\big).$$ Thus there exists a unique element, say $\mu(b_1, b_2) \in A$, such that $$\mu(b_1, b_2)= \big[s(b_1),s(b_2)\big]-s\big([b_1,b_2]\big).$$ Note that  $\mu$ gives a $F$-bilinear map from
$B \times B$ to $A$ such that $\mu(b, b) = 0$ for all $b \in B$. Hence $\mu$ determines an element in $\Hom(\Lambda^2 B, A)$, which we again denote by $\mu$. 

Next, we give a necessary and sufficient condition for a pair of automorphisms to be inducible. This provides a solution of Problem \ref{prob1} when $A$ is an abelian Lie algebra.
\bigskip

\noindent \textit{Proof of Theorem \ref{main-thm}.}
Suppose that there exists a $\gamma \in \Aut_A(L)$ such that $\tau(\gamma)=(\theta, \phi)$. Let $s: B \to L$ be a section. Then any element of $L$ can be written uniquely as $a+s(b)$ for some $a\in A$ and $b \in B$. Now, $\phi(b)=p \gamma s(b)$. This implies $ps \phi(b)=p \gamma s(b)$, and hence $\gamma s(b)=s \phi(b)+ \lambda(b)$ for some $\lambda(b) \in A$. Since all the maps involved are $F$-linear, it follows that $\lambda \in \Hom(B,A)$.

To derive condition (1), we use the fact that $\gamma$ is a Lie algebra homomorphism. Let $l_1=a_1+s(b_1)$ and $l_2=a_2+s(b_2)$ be two elements of $L$. Note that $\gamma\big([l_1,l_2]\big)=\big[\gamma(l_1), \gamma(l_2)\big]$. First, consider
\begin{eqnarray*}
\gamma\big([l_1,l_2]\big) & = & \gamma\big([a_1+s(b_1), a_2+s(b_2)]\big)\\
& = & \gamma\big([s(b_1), a_2]-[s(b_2), a_1]+[s(b_1), s(b_2)]\big)\\
& = & \gamma\big([s(b_1), a_2]-[s(b_2), a_1]+\mu(b_1 , b_2) +s([b_1,b_2])\big)\\
& = & \gamma\big([s(b_1), a_2]\big)-\gamma\big([s(b_2), a_1]\big)+\gamma\big(\mu(b_1 , b_2)\big) +\gamma\big(s([b_1,b_2])\big)\\
& = & \big[\gamma(s(b_1)), \gamma(a_2)\big]-\big[\gamma(s(b_2)), \gamma(a_1)\big]+\theta\big(\mu(b_1 , b_2)\big) +s\big(\phi([b_1,b_2])\big)+\lambda([b_1,b_2])\\
& = & \big[s(\phi(b_1))+\lambda(b_1), \theta(a_2)\big]-\big[s(\phi(b_2))+\lambda(b_2), \theta(a_1)\big]\\
& & +\theta\big(\mu(b_1 , b_2)\big) +s\big([\phi(b_1),\phi(b_2)]\big)+\lambda([b_1,b_2])\\
& = & \big[s(\phi(b_1)), \theta(a_2)\big]-\big[s(\phi(b_2)), \theta(a_1)\big]+\theta\big(\mu(b_1 , b_2)\big) +s\big([\phi(b_1),\phi(b_2)]\big)+\lambda([b_1,b_2]).
\end{eqnarray*}

Next, consider
\begin{eqnarray*}
\big[\gamma(l_1),\gamma(l_2)\big] & = & \big[\gamma(a_1+s(b_1)),\gamma(a_2+s(b_2))\big]\\
& = & \big[\theta(a_1)+s(\phi(b_1))+ \lambda(b_1),\theta(a_2)+s(\phi(b_2))+ \lambda(b_2)\big]\\
& = & \big[\theta(a_1), s(\phi(b_2)) \big] + \big[s(\phi(b_1)),\theta(a_2)\big] +\big[s(\phi(b_1)),s(\phi(b_2))\big]\\
& & + \big[s(\phi(b_1)),\lambda(b_2)\big] + \big[\lambda(b_1), s(\phi(b_2))\big]\\
& = & \big[s(\phi(b_1)),\theta(a_2)\big]- \big[s(\phi(b_2)), \theta(a_1)\big] +\big[s(\phi(b_1)),s(\phi(b_2))\big]\\
&   & + \big[s(\phi(b_1)),\lambda(b_2)\big] - \big[s(\phi(b_2)), \lambda(b_1)\big]\\
& = & \big[s(\phi(b_1)),\theta(a_2)\big]- \big[s(\phi(b_2)), \theta(a_1)\big] +\mu \big(\phi(b_1), \phi(b_2)\big)\\
&   & + s\big([\phi(b_1), \phi(b_2)]\big) +\big[s(\phi(b_1)),\lambda(b_2)\big] - \big[s(\phi(b_2)), \lambda(b_1)\big]\\
& = & \big[s(\phi(b_1)),\theta(a_2)\big]- \big[s(\phi(b_2)), \theta(a_1)\big] +\mu \big(\phi(b_1), \phi(b_2)\big)\\
&   & + s\big([\phi(b_1), \phi(b_2)]\big) +\phi(b_1)\lambda(b_2) - \phi(b_2) \lambda(b_1).
\end{eqnarray*}
By comparing the LHS and the RHS we obtain condition (1).

To derive condition (2), we use the fact that $\gamma$ is an isomorphism. Let $b \in B$ and $a \in A$. Then
\begin{eqnarray*}
\alpha \big(\phi(b)\big)(a) & = & \big[s(\phi(b)),a\big]\\
& = & \big[\gamma(s(b))-\lambda(b),a\big]\\
& = & \big[\gamma(s(b)),a\big]\\
& = & \big[\gamma(s(b)),\gamma(\gamma^{-1}(a))\big]\\
& = & \gamma\big([s(b),\gamma^{-1}(a)]\big)\\
& = & \theta\big([s(b),\theta^{-1}(a)]\big)\\
& = & \theta\big(\alpha(b)(\theta^{-1}(a))\big)\\
& = & \big(\theta\alpha(b)\theta^{-1}\big)(a).
\end{eqnarray*}

Conversely, suppose that conditions (1) and (2) are given. Let $l=a+s(b) \in L$. Then define $\gamma:L \to L$ by $$\gamma(l)=\theta(a)+\lambda(b)+s(\phi(b)).$$ Since all the maps are $F$-linear, it follows that $\gamma$ is $F$-linear. Clearly, $\gamma(a)=\theta(a)$ for all $a\in A$. Further, $\overline{\gamma}(b)=p \big(\gamma(s(b))\big)=p\big( \lambda(b)+s(\phi(b))\big)=p\big(s(\phi(b))\big)= \phi(b)$ for all $b \in B$.

Suppose that $\gamma(a+s(b))=0$. This implies that $s(\phi(b))=0$. Since $s$ and $\phi$ are both injective, it follows that $b=0$. This further implies that $a=0$, and hence $\gamma$ is injective. Let $a+s(b) \in L$. Taking $a'=\theta^{-1}\big(a-\lambda(\phi^{-1}(b))\big)$ and $b'=\phi^{-1}(b)$ yields
\begin{eqnarray*}
\gamma\big(a'+s(b')\big) &=&\theta(a')+\lambda(b')+s(\phi(b'))\\
&=& \theta\big(\theta^{-1}\big(a-\lambda(\phi^{-1}(b))\big)\big)+\lambda\big(\phi^{-1}(b)\big)+s\big(\phi(\phi^{-1}(b))\big)\\
&=& a-\lambda(\phi^{-1}(b))+\lambda\big(\phi^{-1}(b)\big)+s(b)\\
&=& a+s(b).
\end{eqnarray*}
Hence $\gamma$ is surjective.

Let $l_1=a_1+s(b_1)$ and $l_2=a_2+s(b_2)$ be two elements of $L$. It remains to prove that $\gamma([l_1,l_2])=[\gamma(l_1), \gamma(l_2)]$.
\begin{eqnarray*}
\big[\gamma(l_1),\gamma(l_2)\big] & = & \big[\gamma(a_1+s(b_1)),\gamma(a_2+s(b_2))\big]\\
& = & \big[\theta(a_1)+ \lambda(b_1)+ s(\phi(b_1)),\theta(a_2)+ \lambda(b_2)+s(\phi(b_2))\big]\\
& = & \big[\theta(a_1), s(\phi(b_2)) \big] + \big[s(\phi(b_1)),\theta(a_2)\big] +\big[s(\phi(b_1)),s(\phi(b_2))\big]\\
& & + \big[s(\phi(b_1)),\lambda(b_2)\big] + \big[\lambda(b_1), s(\phi(b_2))\big]\\
& = & \theta\big( [s(b_1),a_2] \big)- \theta\big([s(b_2),a_1]\big) +\mu \big(\phi(b_1), \phi(b_2) \big)+s\big([\phi(b_1),\phi(b_2)]\big)\\
&   & + \phi(b_1)\lambda(b_2) - \phi(b_2)\lambda(b_1),~\textrm{using (2)}\\
& = & \theta\big( [s(b_1),a_2] \big)- \theta\big([s(b_2),a_1]\big) +s\big([\phi(b_1),\phi(b_2)]\big)\\
&   & +\theta \big( \mu(b_1 , b_2)\big) +\lambda([b_1,b_2]),~\textrm{using (1)}\\
&=& \theta\big([s(b_1), a_2] - [s(b_2), a_1] + \mu(b_1 , b_2)\big) +\lambda([b_1,b_2])+ s\big(\phi([b_1, b_2])\big)\\
&=& \gamma\big([s(b_1), a_2] - [s(b_2), a_1] + \mu(b_1 , b_2) + s([b_1, b_2])\big)\\
&=& \gamma\big([a_1, s(b_2)]+ [s(b_1), a_2]+ [s(b_1), s(b_2)]\big)\\
&=& \gamma\big([a_1+s(b_1),a_2+s(b_2)]\big)\\
&=& \gamma\big([l_1,l_2]\big).
\end{eqnarray*}
This completes the proof of the theorem. $\Box$

\begin{remark}\label{rem-cocycle}
To set notation, we briefly recall the definition of cohomology of Lie algebras. Let $B$ be a Lie algebra and  $A$ be a left $B$-module. For each $0 \leq k \leq \dim (B)$, define $\C^k(B;A)= \Hom(\Lambda^k B,A)$ and $\partial^k: \C^k(B;A) \to \C^{k+1}(B;A)$ by
\begin{eqnarray*}
\partial^k(\nu)(b_0,\dots,b_k) & = & \sum_{i=0}^k (-1)^i ~b_i ~\nu(\dots,\hat{b_i},\dots)\\
& & + \sum_{0 \leq i <j \leq k} (-1)^{i+j}~ \nu\big([b_i,b_j], \dots, \hat{b_i}, \dots,\hat{b_j}, \dots\big)
\end{eqnarray*}
for all $\nu \in \C^k(B; A)$. It is straightforward to verify, using the Jacobi Identity and $B$-action on $A$, that $\partial^{k+1}\partial^k=0$. Let $\Z^k(B; A)=\ker(\partial^k)$ be the group of $k$-cocycles and $\B^k(B; A)= \im(\partial^{k-1})$ be the group of $k$-coboundaries. Then $\Ha^k(B;A)= \Z^k(B; A)/\B^k(B; A)$ is the $k$-dimensional Lie algebra cohomology of $B$ with values in $A$.

Let $\alpha: B \to  \End(A)$ be the $B$-module structure on $A$ and $\Ext_\alpha(B, A)$ denote the set of equivalence classes of extensions of $B$ by $A$ inducing $\alpha$. Let $\mathcal{E}: 0 \to A \to L \to B \to 0$ be an extension inducing $\alpha$ and $\mu \in \Hom(\Lambda^2 B, A)$ be the $F$-bilinear map associated to section $s:B \to L$. Then it is easy to see that $\mu$ is a 2-cocycle and 2-cocyles corresponding to different sections differ by a 2-coboundary. Thus the map $[\mathcal{E}] \longmapsto [\mu]$ gives a bijection $$\Ext_\alpha(B, A) \longleftrightarrow \Ha^2(B; A).$$ See \cite[p.238]{Hilton} for a proof and more details.

Now, let $(\theta, \phi) \in \Aut(A) \times \Aut(B)$. Since $\phi \in \Aut(B)$, we replace $\phi(b_i)$ by $b_i$ in condition (1) and obtain the following condition:

\begin{eqnarray*}
\theta \big( \mu(\phi^{-1}(b_1), \phi^{-1}(b_2))\big)-\mu \big(b_1,b_2 \big) & = & b_1\lambda\big(\phi^{-1}(b_2)\big)-b_2\lambda\big(\phi^{-1}(b_1)\big)-\lambda\big([\phi^{-1}(b_1),\phi^{-1}(b_2)]\big)\\
& = & b_1\lambda'(b_2)-b_2\lambda'(b_1)-\lambda'\big([b_1,b_2]\big)\\
&= & \partial^1(\lambda')(b_1,b_2),
\end{eqnarray*}
where $\lambda'=\lambda \phi^{-1} \in \C^1(B;A)$ and $b_1,b_2 \in B$. Thus condition (1) is equivalent to saying that the LHS of the above equation is a 2-coboundary.
\end{remark}

\begin{remark}\label{rem-compatible}
A pair $(\theta,\, \phi) \in \Aut(A) \times \Aut(B)$ is called compatible if $$\alpha \phi(b)=\theta\alpha(b)\theta^{-1}$$ for all $b\in B$. Equivalently, the following diagram commutes.
$$
\xymatrix{
B \ar[d]_{\alpha} \ar[r]^{\phi} & B \ar[d]^{\alpha}\\
\End(A)\hspace*{3mm} \ar[r]_{f \mapsto \theta f \theta^{-1}} & \hspace*{3mm}\End(A)\\}
$$
It is easy to see that the set $C_\alpha$ of all compatible pairs is a subgroup of $\Aut(A) \times \Aut(B)$. Condition (2) of the above theorem shows that every inducible pair is compatible.

Note that $\alpha \phi$ also gives a $B$-module structure on $A$. Then the compatibility condition is equivalent to saying that $\theta: A \to A$ is a $B$-module homomorphism from the $B$-module structure $\alpha$ to the $B$-module structure $\alpha \phi$ on $A$.
\end{remark}
\medskip

\section{An exact sequence for extensions of Lie algebras} \label{sec3}
Let $0 \to A \to L \stackrel{p}{\to} B \to 0$ be an abelian extension of Lie algebras over a field $F$ and $s:B \to L$ be a section of $p$. We show that the obstruction to inducibility of a pair of automorphisms in $\Aut(A) \times \Aut(B)$ lies in the Lie algebra cohomology $\Ha^2(B;A)$. In fact, we derive an exact sequence (see (\ref{wells-seq})) corresponding to the above extension, and relating the group of derivations $\Z^1(B;A)$, the automorphism group $\Aut_A(L)$ and the cohomology $\Ha^2(B;A)$. Here, we consider the abelian additive group structure on $\Z^1(B;A)$. The sequence is similar to the one derived for extensions of groups by Wells in \cite{Wells} and studied subsequently in \cite{Jin, Jin1, Passi, Robinson}. 

Let $\Aut^{A,B}(L)= \big\{\gamma \in \Aut(L)~|~ \tau(\gamma)=(1_A,1_B)\big\}$. Recall that
\begin{eqnarray*}
\Z^1(B;A) & = & \big\{\lambda \in \C^1(B;A)~|~ \partial^1(\lambda)=0 \big\}\\
& =& \big\{\lambda \in \C^1(B;A)~|~ \lambda\big([b_0,b_1]\big)=\big[s(b_0), \lambda(b_1)\big]-\big[s(b_1), \lambda(b_0)\big]~\textrm{for all}~b_0,b_1 \in B \big\}.
\end{eqnarray*}

\begin{proposition}
Let $\mathcal{E}:0 \to A \to L \to B \to 0$ be an abelian extension of Lie algebras over a field $F$. Then $\Z^1(B;A) \cong \Aut^{A,B}(L)$ as groups.
\end{proposition}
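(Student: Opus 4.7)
The natural plan is to specialize Theorem \ref{main-thm} to the pair $(\theta,\phi) = (1_A, 1_B)$, which is certainly inducible, and read off what structure the lifts carry. Fix a section $s:B\to L$, so that every element of $L$ has the unique form $a + s(b)$.

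For $\gamma \in \Aut^{A,B}(L)$, since $p\gamma s(b) = b = ps(b)$, the difference $\gamma(s(b)) - s(b)$ lies in $A$; set $\lambda_\gamma(b) := \gamma(s(b)) - s(b)$. The proof of Theorem \ref{main-thm} showed that $\lambda_\gamma \in \Hom(B,A) = \C^1(B;A)$, and condition (1) with $\theta = 1_A$, $\phi = 1_B$ collapses to
$$0 = b_1\lambda_\gamma(b_2) - b_2\lambda_\gamma(b_1) - \lambda_\gamma([b_1,b_2]),$$
which is precisely the $1$-cocycle condition. Thus $\lambda_\gamma \in \Z^1(B;A)$, so we have a well-defined map $\Psi : \Aut^{A,B}(L) \to \Z^1(B;A)$, $\gamma \mapsto \lambda_\gamma$.

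For the inverse, given $\lambda \in \Z^1(B;A)$, define $\gamma_\lambda(a+s(b)) = a + \lambda(b) + s(b)$. Condition (2) of Theorem \ref{main-thm} is automatic, and condition (1) is exactly the cocycle identity written above. Hence the converse direction of Theorem \ref{main-thm} applies and produces $\gamma_\lambda \in \Aut^{A,B}(L)$. The assignments $\gamma \mapsto \lambda_\gamma$ and $\lambda \mapsto \gamma_\lambda$ are mutually inverse by direct inspection on elements of the form $a+s(b)$.

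It remains to verify that $\Psi$ is a group homomorphism, where $\Z^1(B;A)$ carries its additive structure and $\Aut^{A,B}(L)$ carries composition. For $\gamma_1,\gamma_2 \in \Aut^{A,B}(L)$ and $b \in B$, using that $\gamma_1$ fixes $A$ pointwise,
$$(\gamma_1\gamma_2)(s(b)) = \gamma_1\bigl(s(b) + \lambda_{\gamma_2}(b)\bigr) = s(b) + \lambda_{\gamma_1}(b) + \lambda_{\gamma_2}(b),$$
so $\lambda_{\gamma_1\gamma_2} = \lambda_{\gamma_1} + \lambda_{\gamma_2}$, as required. There is no real obstacle here: once Theorem \ref{main-thm} is in hand, the only content is the trivial observation that lifts of $(1_A,1_B)$ are parametrized by $\lambda \in \Hom(B,A)$ satisfying the cocycle condition, and that composition on the $\gamma$-side corresponds to addition on the $\lambda$-side.
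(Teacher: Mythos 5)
Your proof is correct, but it takes a different route from the paper. The paper proves the proposition by direct verification: it defines $\psi(\lambda)=\gamma_\lambda$ with $\gamma_\lambda\big(a+s(b)\big)=a+\lambda(b)+s(b)$, checks by an explicit bracket computation that $\gamma_\lambda$ is a Lie algebra automorphism, checks injectivity, surjectivity onto $\Aut^{A,B}(L)$ (which involves a second bracket computation showing that the $\lambda$ extracted from $\gamma\in\Aut^{A,B}(L)$ satisfies the cocycle identity), and the homomorphism property. You instead obtain both directions by specializing Theorem \ref{main-thm} to the pair $(1_A,1_B)$: condition (2) trivializes, condition (1) becomes exactly $\partial^1\lambda=0$, and the two bracket computations you would otherwise have to repeat are already contained in the proof of that theorem. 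One small caveat: the statement of Theorem \ref{main-thm} is only an existence criterion, so to know that the lift corresponding to a given $\lambda\in\Z^1(B;A)$ is precisely your $\gamma_\lambda$, and that $\gamma\mapsto\lambda_\gamma$ and $\lambda\mapsto\gamma_\lambda$ are mutually inverse, you must invoke the explicit construction $\gamma\big(a+s(b)\big)=\theta(a)+\lambda(b)+s(\phi(b))$ from the theorem's proof rather than the theorem alone; you do this (your formula for $\gamma_\lambda$ is exactly that construction), so there is no gap, but it is worth stating explicitly that you are reusing the proof, not just the statement. The homomorphism verification $\lambda_{\gamma_1\gamma_2}=\lambda_{\gamma_1}+\lambda_{\gamma_2}$ is genuinely new content beyond the theorem and you handle it correctly (the paper checks the analogous identity $\gamma_{\lambda_1+\lambda_2}=\gamma_{\lambda_2}\gamma_{\lambda_1}$ on the other side). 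Net comparison: your argument is shorter and avoids duplicating computations, at the cost of depending on the internals of Theorem \ref{main-thm}; the paper's is self-contained and keeps the proposition independent of the theorem's proof.
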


\begin{proof}
Define $\psi: \Z^1(B;A) \to \Aut(L)$ by $\psi(\lambda)= \gamma_\lambda$, where $\gamma_\lambda:L \to L$ is given by
$$\gamma_\lambda\big(a+s(b)\big)=a+\lambda(b)+s(b)~\textrm{for all}~a \in A~\textrm{and}~b \in B.$$ Since $s$ and $\lambda$ are $F$-linear maps, it follows that $\gamma_\lambda$ is also $F$-linear. Let $l_1=a_1+s(b_1)$ and $l_2=a_2+s(b_2)$. Then

\begin{eqnarray*}
\gamma_\lambda\big([l_1,l_2]\big)& = & \gamma_\lambda\big([a_1+s(b_1), a_2+s(b_2)]\big)\\
&= & \gamma_\lambda\big([a_1, s(b_2)]+ [s(b_1), a_2] +[s(b_1), s(b_2)]\big)\\
& =& \gamma_\lambda\big(-[s(b_2),a_1]+ [s(b_1), a_2] +\mu(b_1,b_2)+s([b_1,b_2])\big)\\
& =& -\big[s(b_2),a_1\big]+ \big[s(b_1), a_2\big] +\mu(b_1,b_2)+\lambda\big([b_1,b_2]\big)+s\big([b_1,b_2]\big)\\
& =& -\big[s(b_2),a_1\big]+ \big[s(b_1), a_2\big] +\mu(b_1,b_2)+\big[s(b_1), \lambda(b_2)\big]-\big[s(b_2), \lambda(b_1)\big]+s\big([b_1,b_2]\big)\\
& =& -\big[s(b_2),a_1\big]+ \big[s(b_1), a_2\big] +\big[s(b_1), \lambda(b_2)\big]-\big[s(b_2), \lambda(b_1)\big]+\big[s(b_1),s(b_2)\big]\\
& =& \big[\gamma_\lambda(l_1), \gamma_\lambda(l_2)\big].
\end{eqnarray*}
Thus $\gamma_\lambda$ is a Lie algebra homomorphism of $L$. Since $s$ is injective, $\gamma_\lambda(a+s(b))=0$ implies that $a=0$ and $b=0$. Finally, if $a+s(b) \in L$, then $\gamma_\lambda(a-\lambda(b)+s(b))= a+s(b)$. Hence $\gamma_\lambda$ is a Lie algebra automorphism of $L$. Clearly, $\tau (\gamma_\lambda)=(1_A,1_B)$, and hence $\psi(\gamma)=\gamma_\lambda \in \Aut^{A,B}(L)$.

Let $\lambda_1, \lambda_2 \in \Z^1(B;A)$. Then
\begin{eqnarray*}
\gamma_{\lambda_1+\lambda_2}\big(a+s(b)\big) & = & a+ \lambda_1(b)+\lambda_2(b)+s(b)\\
& = & \gamma_{\lambda_2}\big(a+ \lambda_1(b)+s(b)\big)\\
& = & \gamma_{\lambda_2}\big(\gamma_{\lambda_1}(a +s(b))\big).
\end{eqnarray*}

Thus $\psi$ is a group homomorphism. It is easy to see that $\psi$ is injective. Finally, we show that $\psi$ is surjective onto $\Aut^{A,B}(L)$. Let $\gamma \in \Aut^{A,B}(L)$. Since $\overline{\gamma}=1_B$, we have $p\big(\gamma(s(b))\big)=b=p\big(s(b)\big)$ of all $b \in B$. This implies that $\gamma\big(s(b)\big)= \lambda(b)+s(b)$ for some element $\lambda(b) \in A$. We claim that $\lambda \in \Z^1(B;A)$. Since $\gamma$ and $s$ are $F$-linear maps, it follows that $\lambda:B \to A$ is also $F$-linear. Since $\gamma$ is a Lie algebra homomorphism, we have $\gamma\big([s(b_1), s(b_2)]\big)=\big[\gamma(s(b_1)), \gamma(s(b_2))\big]$ for all $b_1, b_2 \in B$. But
\begin{eqnarray*}
\gamma\big([s(b_1), s(b_2)]\big) & = & \gamma\big(\mu(b_1, b_2)+ s([b_1, b_2])\big)\\
&=& \mu(b_1, b_2)+ \lambda\big([b_1, b_2]\big)+ s\big([b_1, b_2]\big)\\
&=& \lambda\big([b_1, b_2]\big)+ \big[s(b_1), s(b_2)\big].
\end{eqnarray*}
On the other hand, we have

\begin{eqnarray*}
\big[\gamma(s(b_1)), \gamma(s(b_2))\big] & = & \big[\lambda(b_1)+ s(b_1), \lambda(b_2)+ s(b_2)\big]\\
& = & \big[\lambda(b_1), s(b_2)\big] + \big[s(b_1), \lambda(b_2)\big]+ \big[s(b_1), s(b_2)\big].
\end{eqnarray*}
Equating these two expressions, we obtain $\lambda\big([b_1, b_2]\big)= \big[\lambda(b_1), s(b_2)\big] + \big[s(b_1), \lambda(b_2)\big]$. Hence $\lambda \in \Z^1(B;A)$. This completes the proof of the proposition.
\end{proof}

In view of the above proposition, we can view $\Z^1(B;A)$ as a subgroup of $\Aut_A(L)$. By Remark \ref{rem-compatible}, the image of $\tau$ lies in $C_\alpha$. For each $(\theta, \phi) \in \C_\alpha$, we define $\mu_{\theta, \phi}: B \otimes B \to A$ by
$$\mu_{\theta, \phi}(b_1, b_2)=\theta \big( \mu(\phi^{-1}(b_1), \phi^{-1}(b_2))\big)-\mu \big(b_1,b_2 \big)~\textrm{for}~b_1,b_2 \in B.$$
It can be easily seen that $\mu_{\theta, \phi}$ is a 2-cocycle. Since $\mu$ is the 2-cocycle corresponding to the extension $\mathcal{E}$, this defines a map $$\omega_{\mathcal{E}}: C_\alpha \to \Ha^2(B;A)$$ given by $$\omega_{\mathcal{E}} (\theta, \phi)= [\mu_{\theta, \phi}],$$ the cohomology class of $\mu_{\theta, \phi}$. 

By Theorem \ref{main-thm} and Remark \ref{rem-cocycle}, the pair $(\theta, \phi) \in C_\alpha$ is inducible if and only if $\mu_{\theta, \phi}$ is a 2-coboundary. Thus $[\mu_{\theta, \phi}] \in \Ha^2(B;A)$ is an obstruction to inducibility of the pair $(\theta, \phi) \in C_\alpha$. With the preceding discussion, we have derived the following exact sequence of groups associated to an abelian extension of Lie algebras and proving Theorem \ref{wells-sequence}.

\begin{equation}\label{wells-seq}
0 \to \Z^1(B;A) \stackrel{i}{\longrightarrow} \Aut_A(L) \stackrel{\tau}{\longrightarrow} C_ \alpha \stackrel{\omega_{\mathcal{E}}}{\longrightarrow} \Ha^2(B;A).
\end{equation}

The following are some immediate consequences of the above theorem.

\begin{corollary}
Let $\mathcal{E}:0 \to A \to L \to B \to 0$ be a split extension. Then every compatible pair is inducible.
\end{corollary}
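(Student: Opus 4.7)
The plan is to exploit the fact that a split extension admits a section which is itself a Lie algebra homomorphism, so the associated 2-cocycle $\mu$ vanishes identically. This will cause the obstruction class $\omega_{\mathcal{E}}(\theta,\phi)$ to vanish for every compatible pair, and the corollary will follow immediately from the exact sequence (\ref{wells-seq}) (equivalently, directly from Theorem \ref{main-thm}).

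First I would choose a splitting $s : B \to L$ with $[s(b_1), s(b_2)] = s([b_1, b_2])$ for all $b_1, b_2 \in B$. By the definition of $\mu$ in Section \ref{sec2}, this choice yields $\mu(b_1, b_2) = [s(b_1), s(b_2)] - s([b_1, b_2]) = 0$. Thus the 2-cocycle associated to $\mathcal{E}$ relative to this particular section is identically zero; note that the $B$-module structure $\alpha$ on $A$ does not depend on the section, so compatibility of a pair $(\theta, \phi) \in C_\alpha$ is an intrinsic notion unaffected by this choice.

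Next, given a compatible pair $(\theta, \phi) \in C_\alpha$, I would verify the two conditions of Theorem \ref{main-thm}. Condition (2), $\alpha \phi(b) = \theta \alpha(b) \theta^{-1}$, is precisely the definition of compatibility and thus holds by hypothesis. For condition (1), with $\mu \equiv 0$ the equation reduces to
$$0 = \phi(b_1)\lambda(b_2) - \phi(b_2)\lambda(b_1) - \lambda([b_1, b_2]),$$
and I would simply take $\lambda = 0 \in \Hom(B, A)$, which satisfies the requirement trivially. Therefore $(\theta, \phi)$ is inducible.

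Alternatively, and equivalently, one may note that with $\mu \equiv 0$ the 2-cocycle $\mu_{\theta,\phi}(b_1,b_2) = \theta\bigl(\mu(\phi^{-1}(b_1),\phi^{-1}(b_2))\bigr) - \mu(b_1,b_2)$ is the zero map, so $\omega_{\mathcal{E}}(\theta,\phi) = [0] = 0$ in $\Ha^2(B;A)$. By exactness of (\ref{wells-seq}) at $C_\alpha$, the compatible pair lies in $\tau\bigl(\Aut_A(L)\bigr)$, hence is inducible. There is really no obstacle here; the content of the corollary is simply that splitness trivializes the obstruction class, and the proof amounts to unwinding the definitions.
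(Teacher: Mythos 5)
Your proof is correct and follows essentially the same route as the paper: a splitting section makes the 2-cocycle $\mu$ identically zero, so the obstruction $\omega_{\mathcal{E}}$ vanishes and the conclusion follows from Theorem \ref{main-thm} (equivalently, exactness of (\ref{wells-seq})). Your version just spells out the details that the paper compresses into ``by definition, $\omega_{\mathcal{E}}$ is trivial.''
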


\begin{proof}
By definition, if $0 \to A \to L \to B \to 0$ is a split extension, then $\omega_{\mathcal{E}}$ is trivial. The result now follows from the exactness of sequence (\ref{wells-seq}).
\end{proof}

\begin{corollary}
Let $0 \to A \to L \to B \to 0$ be an extension of finite dimensional Lie algebras over a field $F$ of characteristic 0. Suppose that $B$ is semisimple. Then every compatible pair is inducible.
\end{corollary}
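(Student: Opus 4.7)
The plan is to deduce this directly from the exact sequence in Theorem \ref{wells-sequence} together with Whitehead's second lemma. Recall that by the exactness of
$$0 \to \Z^1(B;A) \stackrel{i}{\longrightarrow} \Aut_A(L) \stackrel{\tau}{\longrightarrow} C_ \alpha \stackrel{\omega_{\mathcal{E}}}{\longrightarrow} \Ha^2(B;A),$$
a compatible pair $(\theta,\phi) \in C_\alpha$ is inducible if and only if $\omega_{\mathcal{E}}(\theta,\phi)=0$ in $\Ha^2(B;A)$. Therefore it suffices to show that $\Ha^2(B;A)=0$ under the hypotheses.

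Since $L$ is finite-dimensional, so is its ideal $A$, and the induced $B$-module structure $\alpha : B \to \End(A)$ makes $A$ into a finite-dimensional $B$-module. Whitehead's second lemma states that if $B$ is a finite-dimensional semisimple Lie algebra over a field of characteristic $0$ and $A$ is any finite-dimensional $B$-module, then $\Ha^2(B;A)=0$. Applying this, the map $\omega_{\mathcal{E}}$ is forced to be the zero map, so every element of $C_\alpha$ lies in the image of $\tau$, i.e.\ every compatible pair is inducible.

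The only potential subtlety is citing Whitehead's second lemma correctly; since it is a standard result (appearing in, e.g., Jacobson or Weibel), no new calculation is required, and the entire argument reduces to observing that the obstruction group vanishes. Thus the proof is essentially a one-line consequence of Theorem \ref{wells-sequence} plus Whitehead's second lemma.
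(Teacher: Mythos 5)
Your proposal is correct and follows essentially the same route as the paper: the paper likewise invokes Whitehead's Second Lemma to conclude $\Ha^2(B;A)=0$ and then reads off inducibility of every compatible pair from the exactness of the sequence in Theorem \ref{wells-sequence}. Your additional remark that $A$ is a finite-dimensional $B$-module via the induced action is the (implicit) hypothesis check the paper leaves to the reader, so nothing is missing.
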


\begin{proof}
By Whitehead's Second Lemma \cite[Proposition 6.3, p. 249]{Hilton}, we have $\Ha^2(B;A)=0$, and hence the result follows.
\end{proof}

Recall that a  finite dimensional Lie algebra $L$ is said to be perfect if $L=[L,L]$. Also, the Schur multiplier of $L$ is defined as $\mathcal{M}(L)=\Ha^2(L, F)$, where $F$ is viewed as a trivial $L$-module.

\begin{corollary}
Let $0 \to A \to L \to B \to 0$ be a central extension of finite dimensional Lie algebras over a field $F$. Suppose that $B$ is perfect and $\mathcal{M}(B)=0$. Then every pair in $\Aut(A) \times \Aut(B)$ is inducible.
\end{corollary}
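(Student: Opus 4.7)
The strategy is to read off the conclusion directly from the exact sequence (\ref{wells-seq}) by verifying that, under these hypotheses, the compatibility subgroup $C_\alpha$ equals all of $\Aut(A) \times \Aut(B)$ and the obstruction target $\Ha^2(B;A)$ is zero. Once both facts are established, exactness of (\ref{wells-seq}) forces $\tau$ to surject onto $\Aut(A) \times \Aut(B)$, which is precisely the claim.

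First, I would exploit centrality. Because $A$ lies in the center of $L$, we have $[s(b), a] = 0$ for every $b \in B$ and $a \in A$, so the induced $B$-module structure $\alpha: B \to \End(A)$ is the zero map. Condition (2) of Theorem \ref{main-thm} then becomes $0 = \theta \cdot 0 \cdot \theta^{-1}$, which holds for every $(\theta, \phi) \in \Aut(A) \times \Aut(B)$. Consequently $C_\alpha = \Aut(A) \times \Aut(B)$, so compatibility imposes no constraint at all.

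Second, I would show $\Ha^2(B;A) = 0$. Since $\alpha$ is trivial, the Chevalley--Eilenberg differentials $\partial^k$ from Remark \ref{rem-cocycle} involve only the bracket on $B$, not any action on $A$. Choosing an $F$-basis $\{a_1, \dots, a_n\}$ of $A$ therefore splits the complex $\C^{*}(B;A) = \Hom(\Lambda^{*} B, A)$ as a direct sum of $n$ copies of $\C^{*}(B; F)$ with componentwise differential, so $\Ha^2(B; A) \cong \Ha^2(B; F) \otimes_F A = \mathcal{M}(B) \otimes_F A$, which vanishes by hypothesis. In particular the obstruction map $\omega_{\mathcal{E}}$ is identically zero on $C_\alpha$, and the exactness of (\ref{wells-seq}) gives the result.

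The argument is essentially a two-line corollary of the sequence once centrality is unpacked; there is no substantive obstacle, only the bookkeeping identification $\Ha^2(B;A) \cong \mathcal{M}(B) \otimes_F A$ for trivial coefficient modules. I note that the perfectness hypothesis on $B$ does not enter the proof directly along this route, but it is the natural companion to $\mathcal{M}(B) = 0$ (together they say that $B$ is ``superperfect'', so that $B$ admits no nontrivial central extensions up to isomorphism).
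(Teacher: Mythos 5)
Your proposal is correct, and its overall skeleton is the same as the paper's: centrality makes $\alpha$ trivial, hence $C_\alpha=\Aut(A)\times\Aut(B)$, and then vanishing of $\Ha^2(B;A)$ plus exactness of (\ref{wells-seq}) gives inducibility of every pair. Where you diverge is the key vanishing step. The paper simply cites Batten's Theorem 6.12, which deduces $\Ha^2(B;A)=0$ from $B$ perfect and $\mathcal{M}(B)=0$; you instead prove the vanishing directly, observing that for the trivial $B$-module $A$ (and $B$, $A$ finite dimensional over the field $F$) the Chevalley--Eilenberg complex $\Hom(\Lambda^{*}B,A)$ decomposes as $\Hom(\Lambda^{*}B,F)\otimes_F A$ with differential $\partial\otimes 1$, so $\Ha^2(B;A)\cong \Ha^2(B;F)\otimes_F A=\mathcal{M}(B)\otimes_F A=0$. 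This is a sound, self-contained argument, and your side remark is accurate: with the paper's definition $\mathcal{M}(B)=\Ha^2(B;F)$ and coefficients in a field, perfectness of $B$ is not actually needed (unlike the group-theoretic analogue, where an $\operatorname{Ext}$ term involving $B^{ab}$ must be killed; over a field that term vanishes automatically since every module is free). What the paper's route buys is brevity and alignment with Batten's treatment of the multiplier (defined there via a free presentation, i.e.\ homologically, where the stated theorem carries the perfectness hypothesis); what your route buys is transparency and a slightly stronger statement, at the cost of the small bookkeeping identification you carry out, which correctly uses finite dimensionality of $B$ to identify $\Hom(\Lambda^k B,A)$ with $\Hom(\Lambda^k B,F)\otimes_F A$.
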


\begin{proof}
Since $0 \to A \to L \to B \to 0$ is a central extension, it follows that $C_ \alpha=\Aut(A) \times \Aut(B)$. Further, $B$ being perfect and $\mathcal{M}(B)=0$ implies that $\Ha^2(B, A)=0$ by \cite[Theorem 6.12]{Batten}. Hence it follows from the exact sequence (\ref{wells-seq}) that every pair in $\Aut(A) \times \Aut(B)$ is inducible.
\end{proof}

Let $\mathcal{E}: 0 \to A \to L \to B \to 0$ be an abelian extension of Lie algebras over a field $F$  and $$\widetilde{C_ \alpha}=\big\{(\theta, \phi) \in C_ \alpha~|~\omega_{\mathcal{E}}(\theta, \phi)=0 \big\}.$$ Then the exact sequence (\ref{wells-seq}) becomes the following short exact sequence
\begin{equation}\label{short-wells-seq}
0 \to \Z^1(B;A) \stackrel{i}{\longrightarrow} \Aut_A(L) \stackrel{\tau}{\longrightarrow} \widetilde{C_ \alpha} \to 0.
\end{equation}

It is natural to investigate splitting of this short exact sequence, and we prove the following.

\begin{theorem}\label{sequence-split}
Let $0 \to A \to L \to B \to 0$ be a split and abelian extension of Lie algebras over a field $F$. Then the associated short exact sequence $0 \to \Z^1(B;A) \stackrel{i}{\longrightarrow} \Aut_A(L) \stackrel{\tau}{\longrightarrow} \widetilde{C_ \alpha} \to 0$ is also split.
\end{theorem}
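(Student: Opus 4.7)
\textbf{Proof plan for Theorem \ref{sequence-split}.}
The plan is to exhibit an explicit set-theoretic splitting $\sigma: \widetilde{C_\alpha} \to \Aut_A(L)$ of $\tau$ and then verify it is a group homomorphism. The key observation is that because the extension splits, we can choose the section $s: B \to L$ to be a Lie algebra homomorphism. With this choice, the associated 2-cocycle satisfies
\[
\mu(b_1, b_2) = [s(b_1), s(b_2)] - s([b_1, b_2]) = 0
\]
for all $b_1, b_2 \in B$. Consequently, $\mu_{\theta,\phi} = 0$ for every compatible pair, so $\omega_{\mathcal{E}}$ is the zero map and $\widetilde{C_\alpha} = C_\alpha$.

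Using this identically vanishing $\mu$, I would define
\[
\sigma(\theta, \phi)(a + s(b)) = \theta(a) + s(\phi(b))
\]
for each $(\theta, \phi) \in C_\alpha$ and $a + s(b) \in L$. To see that $\sigma(\theta, \phi)$ lies in $\Aut_A(L)$, I would invoke Theorem \ref{main-thm}: with $\lambda = 0$ and $\mu = 0$, condition (1) reduces to the identity $0 = 0$, while condition (2) is precisely the compatibility assumption on the pair $(\theta, \phi)$. Hence $\sigma(\theta, \phi)$ is a well-defined automorphism of $L$ preserving $A$, and clearly $\tau\big(\sigma(\theta, \phi)\big) = (\theta, \phi)$.

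It remains to check that $\sigma$ is a group homomorphism, which is a short direct computation: for $(\theta_1, \phi_1), (\theta_2, \phi_2) \in C_\alpha$,
\[
\sigma(\theta_1, \phi_1)\, \sigma(\theta_2, \phi_2)(a + s(b)) = \sigma(\theta_1, \phi_1)\big(\theta_2(a) + s(\phi_2(b))\big) = \theta_1\theta_2(a) + s(\phi_1\phi_2(b)),
\]
which equals $\sigma(\theta_1\theta_2, \phi_1\phi_2)(a + s(b))$. Combined with $\tau \circ \sigma = \mathrm{id}$, this exhibits $\sigma$ as the desired splitting of the short exact sequence (\ref{short-wells-seq}). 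There is no serious obstacle here; the only subtlety is the initial observation that a splitting of the extension lets us pick a section for which $\mu$ vanishes identically, thereby forcing the formulas of Theorem \ref{main-thm} to degenerate and making the choice $\lambda = 0$ work uniformly across all compatible pairs.
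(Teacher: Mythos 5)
Your proposal is correct and takes essentially the same approach as the paper: both exhibit the splitting by sending a compatible pair $(\theta,\phi)$ to the ``diagonal'' automorphism $a+s(b)\mapsto\theta(a)+s(\phi(b))$ for a Lie-algebra-homomorphism section $s$, after observing that $\mu=0$ forces $\widetilde{C_\alpha}=C_\alpha$. The only cosmetic difference is that the paper rewrites $L$ as the semidirect product $A\rtimes B$ and verifies the bracket condition directly from compatibility, while you delegate that verification to the constructive converse of Theorem~\ref{main-thm} with $\lambda=0$, which is the same computation.
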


\begin{proof}
Let $A\rtimes B= A \oplus B$ as a $F$-vector space and equipped with the Lie algebra structure given by
$$\big[(a_1,b_1),(a_2,b_2)\big]=\big(b_1 a_2- b_2 a_1, [b_1,b_2]\big)~\textrm{for}~a_1,a_2 \in A~\textrm{and}~b_1,b_2 \in B.$$
Then the split extension  $0 \to A \to L \to B \to 0$ is equivalent to the extension $$0 \to A \hspace*{1mm} \stackrel{a \mapsto (a,0)}{\longrightarrow} \hspace*{1mm}A\rtimes B \hspace*{1mm} \stackrel{(a,b) \mapsto b}{\longrightarrow} \hspace*{1mm}B \to 0,$$
and hence $\Aut_A(L) \cong \Aut_A(A\rtimes B)$. Note that, for a split extension, the corresponding 2-cocycle is zero, and hence $\widetilde{C_ \alpha}=C_ \alpha$. Now we define a section $\sigma: \widetilde{C_ \alpha} \to \Aut_A(A\rtimes B)$ by $\sigma(\theta, \phi)=\gamma$, where $\gamma(a,b)= \big(\theta (a), \phi(b)\big)$ for $a \in A$ and $b \in B$. Clearly, $\gamma$ is $F$-linear. Further, for $a_1,a_2 \in A$ and $b_1,b_2 \in B$, we have
\begin{eqnarray*}
\gamma\big([(a_1, b_1), (a_2,b_2)]\big) & = & \gamma\big(b_1 a_2- b_2 a_1, [b_1,b_2]\big)\\
&=& \big(\theta(b_1 a_2- b_2 a_1), \phi([b_1, b_2])\big)\\
&=& \big(\theta(b_1 a_2)- \theta(b_2 a_1), [\phi(b_1), \phi(b_2)]\big)\\
&=& \big(\phi(b_1) \theta(a_2)- \phi(b_2)\theta(a_1), [\phi(b_1),\phi(b_2)]\big)~\textrm{by compatibility of}~(\theta, \phi)\\
&=& \big[(\theta(a_1), \phi(b_1)), (\theta(a_2),\phi(b_2))\big]\\
&=& \big[\gamma(a_1, b_1), \gamma(a_2,b_2)\big].
\end{eqnarray*}
Hence $\gamma \in \Aut_A(A\rtimes B)$. It is clear that $\sigma$ is a group homomorphism, and hence the sequence (\ref{short-wells-seq}) splits.
\end{proof}

We conclude by discussing the map $\omega_{\mathcal{E}}$ in more detail. We show that there is a left action of $C_\alpha$ on $\Ha^2(B;A)$ with respect to which $\omega_{\mathcal{E}}$ is an inner derivation. Hence $\omega_{\mathcal{E}}=0$ if and only if this action of $C_\alpha$ on $\Ha^2(B;A)$ is trivial.

Let $(\theta, \phi) \in C_\alpha$ and $\nu \in \Z^2(B;A)$. For $b_1,b_2 \in B$, define
$$^{(\theta, \phi)}\nu(b_1,b_2)=\theta \big( \nu(\phi^{-1}(b_1), \phi^{-1}(b_2))\big).$$
Compatibility of $(\theta, \phi)$ implies that $^{(\theta, \phi)}\nu \in \Z^2(B;A)$. Further, if $\nu \in \B^2(B;A)$, then $\nu= \partial^1(\lambda)$ for some $\lambda \in \C^1(B;A)$. Again compatibility of $(\theta, \phi)$ implies that $^{(\theta, \phi)}\nu= \partial^1(\theta \lambda \phi^{-1})$, where $\theta \lambda \phi^{-1} \in \C^1(B;A)$. Thus $[^{(\theta, \phi)}\nu] \in \Ha^2(B;A)$. Clearly, this defines a left action of $C_\alpha$ on $\Ha^2(B;A)$ given by 
$$^{(\theta, \phi)}[\nu]= [^{(\theta, \phi)}\nu].$$

\begin{proposition}
Let $\mathcal{E}$ be an extension inducing $\alpha$. Then $\omega_{\mathcal{E}}$ is an inner derivation with respect to the action of $C_\alpha$ on $\Ha^2(B;A)$.
\end{proposition}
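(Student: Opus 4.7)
The plan is to unpack the two definitions side by side and observe that they fit together trivially. Writing out $\mu_{\theta,\phi}$ from the definition of $\omega_{\mathcal{E}}$ and the action formula ${}^{(\theta,\phi)}\nu(b_1,b_2)=\theta\bigl(\nu(\phi^{-1}(b_1),\phi^{-1}(b_2))\bigr)$, one reads off the equality of $2$-cocycles
$$\mu_{\theta,\phi} \;=\; {}^{(\theta,\phi)}\mu - \mu,$$
and hence, passing to cohomology,
$$\omega_{\mathcal{E}}(\theta,\phi) \;=\; {}^{(\theta,\phi)}[\mu]-[\mu]$$
in $\Ha^2(B;A)$. Setting $c:=[\mu]\in\Ha^2(B;A)$ then exhibits $\omega_{\mathcal{E}}$ in exactly the shape of a principal (inner) derivation $(\theta,\phi)\mapsto {}^{(\theta,\phi)}c-c$, which is what the proposition asserts.

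Next I would verify that this is a bona fide derivation, in the sense that
$$\omega_{\mathcal{E}}\bigl((\theta_1,\phi_1)(\theta_2,\phi_2)\bigr) \;=\; \omega_{\mathcal{E}}(\theta_1,\phi_1)+{}^{(\theta_1,\phi_1)}\omega_{\mathcal{E}}(\theta_2,\phi_2).$$
This step is formal: for any left action of a group on an abelian group and any element $c$ in the target, the assignment $g\mapsto {}^{g}c-c$ is a crossed homomorphism, by the one-line expansion ${}^{g_1g_2}c-c={}^{g_1}\bigl({}^{g_2}c-c\bigr)+\bigl({}^{g_1}c-c\bigr)$. So no additional identity involving $\mu$ has to be checked beyond the action axioms themselves.

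The only remaining input that genuinely needs to be confirmed is that the prescription ${}^{(\theta,\phi)}[\nu]=[{}^{(\theta,\phi)}\nu]$ really defines a left action of $C_\alpha$ on $\Ha^2(B;A)$. Well-definedness on cohomology classes (preservation of cocycles, and of coboundaries via the identity $\partial^1(\theta\lambda\phi^{-1})={}^{(\theta,\phi)}\partial^1(\lambda)$) was already checked in the paragraph preceding the proposition, using compatibility of $(\theta,\phi)$. The associativity ${}^{(\theta_1,\phi_1)(\theta_2,\phi_2)}\nu={}^{(\theta_1,\phi_1)}\bigl({}^{(\theta_2,\phi_2)}\nu\bigr)$ is immediate from the componentwise composition in $\Aut(A)\times\Aut(B)$ together with $(\phi_1\phi_2)^{-1}=\phi_2^{-1}\phi_1^{-1}$. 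I do not foresee any real obstacle: the proposition is essentially a repackaging of the definition of $\omega_{\mathcal{E}}$ once the $C_\alpha$-action is in hand, and the corollary mentioned in the text, that $\omega_{\mathcal{E}}=0$ iff the action of $C_\alpha$ fixes $[\mu]$, falls out for free.
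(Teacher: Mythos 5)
Your proposal is correct and follows essentially the same route as the paper: unpack the definitions to get the cocycle identity $\mu_{\theta,\phi}={}^{(\theta,\phi)}\mu-\mu$, pass to cohomology to obtain $\omega_{\mathcal{E}}(\theta,\phi)={}^{(\theta,\phi)}[\mu]-[\mu]$, and recognize this as a principal (inner) derivation with $c=[\mu]$. The extra checks you mention (the crossed-homomorphism identity and well-definedness of the action) are exactly the formal points the paper relies on in the paragraph preceding the proposition.
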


\begin{proof}
Let $\mathcal{E}$ be an extension inducing $\alpha$ and $\omega_{\mathcal{E}}: C_\alpha \to \Ha^2(B;A)$ be the corresponding map. Then for $(\theta, \phi)$ in $C_\alpha$, we have 

\begin{eqnarray*}
\mu_{\theta, \phi}(b_1, b_2) & = & \theta \big( \mu(\phi^{-1}(b_1), \phi^{-1}(b_2)) \big)-\mu(b_1, b_2)\\
& = & ^{(\theta, \phi)}\mu(b_1, b_2)-\mu(b_1, b_2)~\textrm{for all}~b_1,b_2 \in B.
\end{eqnarray*}
This implies $\omega_{\mathcal{E}} (\theta, \phi)= [\mu_{\theta, \phi}]= [^{(\theta, \phi)}\mu-\mu]= ^{(\theta, \phi)}[\mu]-[\mu]$. Hence $\omega_{\mathcal{E}}$ is an inner derivation with respect to the action of $C_\alpha$ on $\Ha^2(B;A)$.
\end{proof}

\begin{corollary}
Let $\alpha: B \to \End(A)$ be a $B$-module structure on $A$ and $(\theta, \phi) \in C_\alpha$. Then $(\theta, \phi)$ is inducible in each extension inducing $\alpha$ if and only if $(\theta, \phi)$ acts trivially on $\Ha^2(B;A)$.
\end{corollary}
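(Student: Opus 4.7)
The plan is to combine the preceding proposition with the classification of extensions recalled in Remark \ref{rem-cocycle}, which identifies $\Ext_\alpha(B,A)$ with $\Ha^2(B;A)$ via the cocycle construction. In one breath: $(\theta,\phi)$ is inducible in $\mathcal{E}$ iff $\omega_{\mathcal{E}}(\theta,\phi) = 0$ (by exactness of \eqref{wells-seq}), and by the preceding proposition this equals ${}^{(\theta,\phi)}[\mu] - [\mu]$, where $[\mu]$ is the cohomology class classifying $\mathcal{E}$. So inducibility in $\mathcal{E}$ is exactly the fixity of the class $[\mu]$ under the action of $(\theta,\phi)$.

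First I would fix $\alpha : B \to \End(A)$, take an arbitrary extension $\mathcal{E} : 0 \to A \to L \to B \to 0$ inducing $\alpha$, and let $\mu$ be a 2-cocycle coming from a section. By \eqref{wells-seq} and Theorem \ref{main-thm}, $(\theta,\phi)$ is inducible in $\mathcal{E}$ iff $\omega_{\mathcal{E}}(\theta,\phi) = 0 \in \Ha^2(B;A)$. Invoking the preceding proposition rewrites this as ${}^{(\theta,\phi)}[\mu] = [\mu]$.

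Next, I would appeal to the bijection $\Ext_\alpha(B,A) \longleftrightarrow \Ha^2(B;A)$ given by $[\mathcal{E}] \mapsto [\mu]$ from Remark \ref{rem-cocycle}. Because this correspondence is surjective, as $\mathcal{E}$ ranges over all extensions inducing $\alpha$, the class $[\mu]$ ranges over all of $\Ha^2(B;A)$. Therefore the condition that $(\theta,\phi)$ be inducible in every such $\mathcal{E}$ translates into ${}^{(\theta,\phi)}[\nu] = [\nu]$ for every $[\nu] \in \Ha^2(B;A)$, which is exactly the statement that $(\theta,\phi)$ acts trivially on $\Ha^2(B;A)$. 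The converse direction is the same computation read backwards.

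There is really no serious obstacle here: the statement is a formal consequence of two already-established facts, namely the cohomological interpretation of $\omega_{\mathcal{E}}$ as the inner derivation $[\nu] \mapsto {}^{(\theta,\phi)}[\nu] - [\nu]$, and the fact that every cohomology class in $\Ha^2(B;A)$ is represented by some extension inducing the prescribed module structure $\alpha$. The only minor point worth being careful about is checking that the action of $C_\alpha$ on $\Ha^2(B;A)$ is well-defined on classes (already verified in the paragraph preceding the proposition), so that the equation ${}^{(\theta,\phi)}[\mu] = [\mu]$ is independent of the choice of representative cocycle $\mu$ for $[\mathcal{E}]$.
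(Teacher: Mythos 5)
Your argument is correct and is exactly the intended one: the paper leaves this corollary without proof as an immediate consequence of the preceding proposition (writing $\omega_{\mathcal{E}}(\theta,\phi)={}^{(\theta,\phi)}[\mu]-[\mu]$), the exactness of the sequence \eqref{wells-seq}, and the bijection $\Ext_\alpha(B,A)\leftrightarrow \Ha^2(B;A)$ from Remark \ref{rem-cocycle}, and you have simply spelled out those three ingredients, including the key surjectivity point that every class in $\Ha^2(B;A)$ is realized by an extension inducing $\alpha$. No gaps.
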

\medskip

\section{Automorphisms of free nilpotent Lie algebras} \label{sec4}
In this section, we focus on automorphisms of free step 2 nilpotent Lie algebras. Let
$$L_{n,2}=\big\langle x_1,\dots,x_n~|~\big[[x_i,x_j],x_k\big]=0~\textrm{for all}~1 \leq i,j,k \leq n  \big\rangle$$
be the free nilpotent Lie algebra of rank $n$ and step 2. Since every Lie algebra on one generator is abelian, we assume that $n \geq 2$. 

Let $L_{n,2}^{(1)}=\big\langle [x_i,x_j]~|~1 \leq j< i \leq n  \big\rangle$ be the derived subalgebra of $L_{n,2}$. Set $Z= \{ z_{i,j}~|~z_{i,j}=[x_i,x_j]~\textrm{for}~1 \leq j< i \leq n \}$. Since $L_{n,2}$ is step 2 nilpotent, it follows that $L_{n,2}^{(1)}$ is a free abelian Lie algebra with basis $Z$ and rank $\frac{n(n-1)}{2}$. If we take the lexicographic order on the basis $Z$ given by
$$z_{2,1} < z_{3,1}< z_{3,2}< \cdots < z_{n,n-1},$$
then $\Aut\big(L_{n,2}^{(1)}\big)\cong \textrm{GL}\big(\frac{n(n-1)}{2}, F \big)$. Let $\theta \in \Aut\big(L_{n,2}^{(1)}\big)$ given by

$$
\theta : \left\{
\begin{array}{ll}
z_{2,1} \longmapsto b_{2,1;2,1} z_{2,1} + b_{2,1;3,1} z_{3,1} +\cdots + b_{2,1;n,n-1} z_{n,n-1} &  \\
\vdots \hspace{1.5cm}\vdots \hspace{3cm}\vdots \hspace{3cm}\vdots &\\
z_{i,j} \longmapsto \sum_{1 \leq l <k \leq n} b_{i,j;k,l} z_{k,l}&\\
\vdots \hspace{1.5cm}\vdots \hspace{3cm}\vdots \hspace{3cm}\vdots &\\
z_{n,n-1} \longmapsto b_{n,n-1;2,1} z_{2,1} + b_{n,n-1;3,1} z_{3,1} +\cdots + b_{n,n-1;n,n-1} z_{n,n-1}. &
\end{array} \right.
$$
Then the matrix $[\theta] \in \textrm{GL}\big(\frac{n(n-1)}{2}, F \big)$. Similarly, $L_{n,2}^{ab}=L_{n,2}/L_{n,2}^{(1)}= \big\langle \overline{x}_1,\dots, \overline{x}_n \big\rangle$ is also a free abelian Lie algebra of rank $n$, and hence $\Aut\big(L_{n,2}^{ab}\big)\cong \textrm{GL}(n, F)$. Let $\phi \in \Aut\big(L_{n,2}^{ab}\big)$ given by
$$
\phi : \left\{
\begin{array}{ll}
\overline{x}_1 \longmapsto a_{11} \overline{x}_1+ \cdots +a_{1n} \overline{x}_n &  \\
\vdots \hspace{1.5cm}\vdots \hspace{1cm}\vdots \hspace{1cm}\vdots &\\
\overline{x}_i \longmapsto a_{i1} \overline{x}_1+ \cdots +a_{in} \overline{x}_n &  \\
\vdots \hspace{1.5cm}\vdots \hspace{1cm}\vdots \hspace{1cm}\vdots &\\
\overline{x}_n \longmapsto a_{n1} \overline{x}_1+ \cdots +a_{n n} \overline{x}_n. &
\end{array} \right.
$$
Then the matrix $[\phi] \in \textrm{GL}(n, F)$. We can now formulate the main theorem of this section.

\begin{theorem}
Let  $(\theta,\phi) \in \Aut\big(L_{n,2}^{(1)}\big) \times \Aut\big(L_{n,2}^{ab}\big)$. If $[\theta]=(b_{i,j;k,l})$ and $[\phi]=(a_{ij})$ are the corresponding matrices, then the pair $(\theta, \phi)$ is inducible if and only if 
\begin{equation}\label{nil1}
a_{ik}a_{jl}- a_{il}a_{jk} =b_{i,j; k,l}~\textrm{for all}~1 \leq j <i \leq n~\textrm{and}~1 \leq l <k \leq n.
\end{equation}
\end{theorem}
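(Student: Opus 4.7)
The plan is to apply Theorem~\ref{main-thm} to the central abelian extension
$$0 \to L_{n,2}^{(1)} \to L_{n,2} \to L_{n,2}^{ab} \to 0.$$
Because $L_{n,2}$ is step $2$ nilpotent, the derived subalgebra $L_{n,2}^{(1)}$ is contained in the center of $L_{n,2}$, so the induced module action $\alpha: L_{n,2}^{ab} \to \End(L_{n,2}^{(1)})$ is identically zero. Consequently, condition (2) of Theorem~\ref{main-thm}, namely $\alpha\phi(b)=\theta\alpha(b)\theta^{-1}$, is automatically satisfied for every pair $(\theta,\phi)$. Moreover, since $L_{n,2}^{ab}$ is abelian, $[b_1,b_2]=0$ for all $b_1,b_2 \in L_{n,2}^{ab}$, so the entire right-hand side of the equation in condition (1) collapses to $0$ for any choice of $\lambda \in \Hom(L_{n,2}^{ab}, L_{n,2}^{(1)})$. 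Thus condition (1) reduces to the single requirement
$$\theta\bigl(\mu(b_1,b_2)\bigr) \;=\; \mu\bigl(\phi(b_1),\phi(b_2)\bigr) \quad \text{for all } b_1,b_2 \in L_{n,2}^{ab},$$
in which case we may take $\lambda=0$.

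Next I would fix the canonical section $s: L_{n,2}^{ab} \to L_{n,2}$ given by $s(\overline{x}_i)=x_i$ and extend $F$-linearly. Since $s([b_1,b_2])=0$, the associated $2$-cocycle is simply $\mu(b_1,b_2)=[s(b_1),s(b_2)]$, and in particular $\mu(\overline{x}_i,\overline{x}_j)=z_{i,j}$ for $i>j$. Because $\mu$ is $F$-bilinear, the inducibility condition above is equivalent to its validity on the basis pairs $(\overline{x}_i,\overline{x}_j)$ with $i>j$.

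The main calculation is then a direct expansion of both sides on basis vectors. On the one hand,
$$\theta\bigl(\mu(\overline{x}_i,\overline{x}_j)\bigr) \;=\; \theta(z_{i,j}) \;=\; \sum_{1 \leq l <k \leq n} b_{i,j;k,l}\, z_{k,l}.$$
On the other hand, using $\phi(\overline{x}_i)=\sum_k a_{ik}\overline{x}_k$ and the bilinearity/antisymmetry of the bracket,
$$\mu\bigl(\phi(\overline{x}_i),\phi(\overline{x}_j)\bigr) \;=\; \Bigl[\sum_k a_{ik} x_k, \sum_l a_{jl} x_l\Bigr] \;=\; \sum_{1 \leq l < k \leq n} (a_{ik}a_{jl}-a_{il}a_{jk})\, z_{k,l}.$$
Comparing coefficients in the basis $\{z_{k,l}\}_{k>l}$ yields precisely the system \eqref{nil1}.

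I do not anticipate a serious obstacle here: the only non-routine observation is that the centrality of the extension and the abelianness of the quotient together collapse Theorem~\ref{main-thm} to the single identity $\theta\circ\mu=\mu\circ(\phi\times\phi)$. Once that reduction is in place, the rest is a transparent expansion of brackets on the generators, and the verification that the resulting $F$-linear map $\gamma$ on $L_{n,2}$ (as constructed in the converse direction of Theorem~\ref{main-thm}) gives an automorphism is supplied by that theorem itself.
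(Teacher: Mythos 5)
Your proposal is correct and follows essentially the same route as the paper: reduce via Theorem~\ref{main-thm} using that the extension is central and the quotient abelian (so condition (2) and the right-hand side of condition (1) vanish), then expand $\theta(z_{i,j})$ and $\bigl[\sum_k a_{ik}x_k,\sum_l a_{jl}x_l\bigr]$ in the basis $\{z_{k,l}\}$ and compare coefficients to obtain \eqref{nil1}. The only cosmetic difference is that the paper phrases the expansion in terms of an explicit automorphism $\gamma$ of $L_{n,2}$, whereas you work directly with the reduced cocycle identity; the computation is identical.
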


\begin{proof} 
For the free nilpotent Lie algebra $L_{n,2}$, we have the following central extension
$$0 \to L_{n,2}^{(1)} \to L_{n,2} \to L_{n,2}^{ab} \to 0.$$
Let $s: L_{n,2}^{ab} \to L_{n,2}$ be the section given by $s(\overline{x}_i)=x_i$ on the generators. Then $\mu(\overline{x}_i, \overline{x}_j)= [x_i, x_j]$ for all $1 \leq i,j \leq n$. Since $L_{n,2}$ is step 2 nilpotent, it follows that RHS of Theorem \ref{main-thm}(1) is 0 and $\alpha$ of Theorem \ref{main-thm}(2) is trivial. Hence the necessary and sufficient condition of Theorem \ref{main-thm} becomes $$\theta \big( \mu(\overline{x}_i , \overline{x}_j)\big)=\mu \big(\phi(\overline{x}_i),\phi(\overline{x}_j) \big)~\textrm{for all}~1 \leq i,j \leq n.$$
In what follows, we show that this is precisely the condition (\ref{nil1}).

Recall that, $L_{n,2}$ is generated as a Lie algebra by $\{ x_1, x_2, \dots, x_n \}$. Further, the set $$\{x_1, x_2, \dots, x_n, z_{2,1}, z_{3,1}, \dots, z_{n,n-1}\}$$ is an ordered basis for $L_{n,2}$ as a vector space over $F$. Thus, if $\gamma \in \Aut(L_{n,2})$, then $\gamma$ is given by
$$
\gamma : \left\{
\begin{array}{ll}
x_1 \longmapsto \alpha_{11} x_1+\alpha_{12} x_2+ \cdots +\alpha_{1n} x_n + \beta_{1;2,1} z_{2,1} + \beta_{1;3,1} z_{3,1} +\cdots + \beta_{1;n,n-1} z_{n,n-1} &  \\
\vdots \hspace{1.5cm}\vdots \hspace{5cm}\vdots \hspace{5cm}\vdots &\\
x_n \longmapsto \alpha_{n1} x_1+\alpha_{n2} x_2+ \cdots +\alpha_{nn} x_n + \beta_{n;2,1} z_{2,1} + \beta_{n;3,1} z_{3,1} +\cdots + \beta_{n;n,n-1} z_{n,n-1} &
\end{array} \right.
$$
for some $\alpha_{ij}, \beta_{i;k,l} \in F$.
Now, suppose that $(\theta, \phi)$ is inducible by $\gamma$. Then 
$$
\overline \gamma : \left\{
\begin{array}{ll}
\overline{x}_1 \longmapsto \alpha_{11} \overline{x}_1+\alpha_{12} \overline{x}_2+ \cdots +\alpha_{1n} \overline{x}_n &  \\
\vdots \hspace{1.5cm}\vdots \hspace{1.7cm}\vdots \hspace{1.7cm}\vdots &\\
\overline{x}_n \longmapsto \alpha_{n1} \overline{x}_1+\alpha_{n2} \overline{x}_2+ \cdots +\alpha_{nn} \overline{x}_n. &
\end{array} \right.
$$
Since $\overline \gamma= \phi$, we obtain 
\begin{equation}\label{nil2}
\alpha_{ij}=a_{ij}~\textrm{for all}~ 1 \leq i, j \leq n.
\end{equation}

Next, we consider $\gamma|_{L_{n,2}^{(1)}}$ as follows
\begin{eqnarray*}
\gamma(z_{i,j}) & = & \gamma \big([x_i,x_j]\big)\\
& = & \big[\gamma(x_i), \gamma(x_j)\big]\\
&= & \big[\alpha_{i1} x_1+\alpha_{i2} x_2+ \cdots +\alpha_{in} x_n + \beta_{i;2,1} z_{2,1} + \beta_{i;3,1} z_{3,1} +\cdots + \beta_{i;n,n-1} z_{n,n-1},\\ 
& & \alpha_{j1} x_1+\alpha_{j2} x_2+ \cdots +\alpha_{jn} x_n + \beta_{j;2,1} z_{2,1} + \beta_{j;3,1} z_{3,1} +\cdots + \beta_{j;n,n-1} z_{n,n-1} \big]\\
&= & \big[\alpha_{i1} x_1+\alpha_{i2} x_2+ \cdots +\alpha_{in} x_n, \alpha_{j1} x_1+\alpha_{j2} x_2+ \cdots +\alpha_{jn} x_n],~\textrm{since $L_{n,2}$ is step 2 nilpotent}\\
&= & \sum_{1 \leq l < k \leq n} \big(\alpha_{ik}\alpha_{jl}- \alpha_{il}\alpha_{jk} \big) z_{k,l}.
\end{eqnarray*}
Since $\gamma|_{L_{n,2}^{(1)}}= \theta$, we obtain 
\begin{equation}\label{nil3}
\alpha_{ik}\alpha_{jl}- \alpha_{il}\alpha_{jk} =b_{i,j; k,l} ~\textrm{for all}~1 \leq j <i \leq n~\textrm{and}~1 \leq l <k \leq n.
\end{equation}
Combining equations (\ref{nil2}) and (\ref{nil3}), we get (\ref{nil1}).
\end{proof}

As a consequence, we obtain the following result for $n=2$.

\begin{corollary}\label{n2k2}
Let  $(\theta,\phi) \in \Aut\big(L_{2,2}^{(1)}\big) \times \Aut\big(L_{2,2}^{ab}\big)$ and $\big(r, [\phi]\big)$ be the corresponding pair of matrices. Then the pair $(\theta, \phi)$ is inducible if and only if $r=\det [\phi]$.
\end{corollary}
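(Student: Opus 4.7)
The plan is to apply the preceding theorem in the excerpt verbatim to the case $n=2$; once the relevant data are identified, the corollary will fall out from a single, short calculation, so the bulk of the work is bookkeeping rather than a substantive argument.

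First I would identify the dimensions and matrices involved. For $n=2$, the derived subalgebra $L_{2,2}^{(1)}$ has the single basis element $z_{2,1} = [x_2, x_1]$, so it is one-dimensional, and $\Aut\big(L_{2,2}^{(1)}\big) \cong \mathrm{GL}(1, F) = F^{\times}$. Hence $[\theta]$ is a $1 \times 1$ matrix whose sole entry is the scalar $r$, and in the notation of (\ref{nil1}) this means $b_{2,1;2,1} = r$, with all other $b_{i,j;k,l}$ vacuous. Meanwhile $[\phi] = (a_{ij})$ is an arbitrary element of $\mathrm{GL}(2, F)$.

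Next I would specialize the criterion (\ref{nil1}) to this situation. The index range $1 \leq j < i \leq 2$ forces $(i,j) = (2,1)$, and similarly $1 \leq l < k \leq 2$ forces $(k,l) = (2,1)$, so the whole system collapses to the single equation
$$a_{22}\,a_{11} - a_{21}\,a_{12} \;=\; b_{2,1;2,1}.$$
Recognizing the left-hand side as $\det[\phi]$ and the right-hand side as $r$, this is exactly the asserted criterion $r = \det[\phi]$, and by the previous theorem it is both necessary and sufficient for $(\theta,\phi)$ to be inducible. There is no real obstacle here; the only point that demands care is matching the scalar $r$ representing $\theta$ with the entry $b_{2,1;2,1}$ in the general indexing scheme, so one does not accidentally get a sign or a transposition wrong.
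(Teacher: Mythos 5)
Your proposal is correct and follows exactly the paper's argument: specialize the criterion (\ref{nil1}) of the preceding theorem to $n=2$, where the index ranges collapse to the single equation $a_{11}a_{22}-a_{12}a_{21}=b_{2,1;2,1}=r$, i.e.\ $r=\det[\phi]$. The paper's own proof is just a terser statement of the same specialization.
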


\begin{proof} 
In this case, the derived subalgebra $L_{2,2}^{(1)}$ is a one-dimensional free abelian algebra generated by $[x_2,x_1]$ and the abelianization $L_{2,2}^{ab}$ is a two-dimensional free abelian algebra with basis $\{\overline{x}_1,\overline{x}_2\}$. The proof now follows from (\ref{nil1}).
\end{proof}

We conclude by giving an example to show that the converse of Theorem \ref{sequence-split} is not true in general.

\begin{example}
Consider the central extension of Lie algebras 
$$0 \to L_{2,2}^{(1)} \to L_{2,2} \to L_{2,2}^{ab} \to 0.$$
Since $L_{2,2}$ is non-abelian, the sequence does not split. We show that the associated short exact sequence
\begin{equation}\label{exple}
0 \to \Z^1\big(L_{2,2}^{ab}; L_{2,2}^{(1)} \big) \stackrel{i}{\longrightarrow} \Aut_{L_{2,2}^{(1)}}\big(L_{2,2}\big) \stackrel{\tau}{\longrightarrow} \widetilde{C_ \alpha} \to 0
\end{equation} 
splits. We define a section $\sigma: \widetilde{C_ \alpha}  \to 
\Aut_{L_{2,2}^{(1)}}\big(L_{2,2}\big)$ which is a group homomorphism, showing that the sequence (\ref{exple}) splits. Define $\sigma(\theta, \phi)= \gamma$, where
$$
\gamma : \left\{
\begin{array}{ll}
x_1 \longmapsto s\big(\phi( \overline{x}_1)\big) &  \\
x_2 \longmapsto s\big(\phi( \overline{x}_2)\big) & \\
$[$ x_1,x_2] \longmapsto \theta\big([x_1,x_2]\big).
\end{array} \right.
$$
Then 
\begin{eqnarray*}
\big[\gamma(x_1), \gamma(x_2) \big] & = & \big[s\big(\phi( \overline{x}_1)\big), s\big(\phi( \overline{x}_2)\big) \big]\\
&=& \big[s(a_{11}\overline{x}_1+a_{12}\overline{x}_2), s(a_{21}\overline{x}_1+ a_{22}\overline{x}_2) \big]\\
&=& \big[a_{11}s(\overline{x}_1)+a_{12}s(\overline{x}_2), a_{21}s(\overline{x}_1)+ a_{22}s(\overline{x}_2) \big]\\
&=& \big[a_{11}x_1+a_{12}x_2, a_{21}x_1+ a_{22}x_2 \big]\\
&=& (a_{11}a_{22}- a_{12}a_{21}) [x_1,x_2]\\
&=& \theta\big( [x_1,x_2] \big)~\textrm{by Corollary \ref{n2k2}, since $(\theta, \phi)$ is inducible}\\
&=& \gamma\big( [x_1,x_2] \big).\\
\end{eqnarray*}
It follows that $\gamma \in \Aut_{L_{2,2}^{(1)}}\big(L_{2,2}\big)$. Since $\tau(\gamma)= (\theta, \phi)$, $\sigma$ is a section. It is easy to see that $\sigma$ is a group homomorphism, and hence the sequence (\ref{exple}) splits.
 \end{example}

\medskip

\medskip \noindent \textbf{Acknowledgement.} The authors thank the referee for many useful comments and for the reference \cite{Bahturin}. Support from the DST-RSF project INT/RUS/RSF/2 is gratefully acknowledged. Bardakov is partially supported by Laboratory of Quantum Topology of Chelyabinsk State University, and grants RFBR-16-01-00414,  RFBR-14-01-00014 and RFBR-15-01-00745. Singh is also supported by DST INSPIRE Scheme IFA-11MA-01/2011 and DST Fast Track Scheme SR/FTP/MS-027/2010.
\medskip

\end{document}